\pgfplotsset{compat=newest}
\def\beq{\begin{equation}}
\def\eeq{\end{equation}}
\def\baq{\begin{eqnarray}}
\def\eaq{\end{eqnarray}}
\def\baqn{\begin{eqnarray*}}
\def\eaqn{\end{eqnarray*}}
\theoremstyle{plain}
\newtheorem{definition}{Definition}
\newtheorem{remark}{Remark}
\newtheorem{example}{Example}
\newtheorem{theorem}{Theorem}
\newtheorem{lemma}[theorem]{Lemma}
\newcommand{\R}{{\mathbb R}}
\newcommand{\interior}{{\rm int}\kern 0.06em}
\newcommand{\Id}{\mathrm{Id}}
\def\<{\langle}
\def\>{\rangle}
\renewcommand*{\backrefalt}[4]{%
\ifcase #1 %
(Not cited)%
\or
(Cited on p.~#2)%
\else
(Cited on pp.~#2)%
\fi
}
\begin{document}
\title{{State-Dependent Sweeping Processes: Asymptotic Behavior and Algorithmic Approaches}}
\author{Samir Adly\thanks{Laboratoire XLIM, Universit\'e de Limoges,
123 Avenue Albert Thomas,
87060 Limoges CEDEX, France\vskip 0mm
Email: \texttt{samir.adly@unilim.fr}} \qquad
 Monica G. Cojocaru\thanks{Department of Mathematics and Statistics, University of Guelph, Guelph, ON N1G 2W1, Canada \vskip 0mm
Email: mcojocar@uoguelph.ca}
 \qquad Ba Khiet Le \thanks{Optimization Research Group, Faculty of Mathematics and Statistics, Ton Duc Thang University, Ho Chi Minh City, Vietnam\vskip 0mm
 E-mail: \texttt{lebakhiet@tdtu.edu.vn}}
 }
\date{}
\maketitle

\begin{abstract}
In this paper, we investigate the asymptotic properties of a particular class of state-dependent sweeping processes. While extensive research has been conducted on the existence and uniqueness of solutions for sweeping processes, there is a scarcity of studies addressing their behavior in the limit of large time. Additionally, we introduce novel algorithms designed for the resolution of quasi-variational inequalities.
As a result, we introduce a new derivative-free algorithm to find zeros of nonsmooth Lipschitz continuous mappings with a linear convergence rate. This algorithm can be effectively used in nonsmooth and nonconvex optimization problems that do not possess necessarily second-order differentiability conditions of the data.
\end{abstract}

{\bf Keywords.} Asymptotic analysis, State-dependent sweeping processes, Quasi-variational inequalities, Derivative-free algorithm\\

{\bf AMS Subject Classification.} 28B05, 34A36, 34A60, 49J52, 49J53, 93D20

\section{Introduction}
In the seventies, J. J. Moreau  \cite{M2,M3,M4}  introduces  and rigorously analyzed sweeping processes which have the form as follows
\begin{equation}\nonumber
\left\{
\begin{array}{l}
\dot{x}(t) \in -{\rm N}_{C(t)}(x(t))\; {\rm a.e.} \; t \in [0,T],\\ \\
x(0) = x_0\in C(0),
\end{array}\right.
\end{equation}
where ${\rm N}_{C(t)}(\cdot)$ denotes the normal cone operator of the convex set $C(t)$  in a Hilbert space $H$.  Originally, sweeping processes describe the movement of a particle $x(t)$ which belongs to a moving set $C(t)$. When the particle touches the boundary, it is forced to come back inside the moving set.  
{Sweeping processes find applications in a wide range of fields, including nonsmooth mechanics and non-regular electrical circuits. The well-posedness of such systems and their extensions has undergone extensive investigation (cf. \cite{aht,ahl,Kunze,t1,t2}, among others). However, research concerning the long-term behavior of trajectories, particularly as time approaches infinity, remains relatively limited (see, e.g., \cite{Daniilidis,le}).}


{In this paper, our focus lies in the study of the asymptotic behavior of the state-dependent sweeping process described as follows:
\beq\label{main}
\left\{
\begin{array}{l}
 {  \dot{x}(t) \in  -f(x(t))-{\rm N}_{K(x(t))}(x(t))}\; {\rm a.e.} \; t \in [0,T],\\ \\
   x(0)=x_0,
\end{array}\right.
\eeq
and its connection with Quasi-variational inequalities (QVIs). Here, the perturbation term $f: H\to H$ is assumed to be $L$-Lipschitz continuous, while the state-dependent moving set $K(x)=C+v(x)$ is defined such that $C$ represents a non-empty closed convex subset of $H$, and $v: H\to H$ is a $l$-Lipschitz continuous map. The well-posedness of equation (\ref{main}) has been well-established  if {$l<1$} (see, e.g., \cite{Kunze}). Our primary contribution in this work is to establish some mild condition under which the trajectory of (\ref{main}) converges to the unique equilibrium  $x^*\in K(x^*)$ with an exponential rate. This equilibrium point $x^*$ is a solution to a well-established variational inclusion, known in the literature as a Quasi-Variational Inequality (QVI), which can be  expressed as follows:
\beq\label{qvi}
0\in f(x^*)+{\rm N}_{K(x^*)}(x^*).
\eeq}
\noindent {Moreover, it's worth noting that the velocity also exhibits  exponential convergence towards zero. The QVI described in (\ref{qvi}) holds significant relevance and finds numerous applications in diverse fields, including engineering, economics, finance, transportation science, traffic management, ecology, energy systems, biomedical engineering, robotics, game theory, among others. For further exploration, we refer to works such as those by \cite{Bensoussan}, \cite{Bliemer}, and \cite{Luca}} etc. {Using a semi-implicit discretization scheme of (\ref{main}), we have 
\beq
\frac{x_{n+1}-x_n}{h}\in-f(x_n)-{\rm N}_{K(x_n)}(x_{n+1}),
\eeq
which further leads to the following formulation
\beq\label{cua}
x_{n+1}={\rm proj}_{K(x_n)}(x_n-hf(x_n)),
\eeq
{where $h>0$ represents the step size in the discretization scheme.}
}
{This method is a gradient type algorithm, as discussed in \cite{Nesterov}, and it is also commonly referred to as the well-known ``catching-up'' algorithm, as documented in \cite{Kunze} and \cite{M4}. It's worth noting that the catching-up algorithm exhibits slow convergence (see, e.g, \cite{Nesterov}), especially when subjected to the relatively restrictive condition $0<l < \frac{1}{\gamma (\gamma+\sqrt{\gamma^2-1})}$, where $f$ is supposed to be $\mu$-strongly monotone and $\gamma=\frac{L}{\mu}\ge 1$.}
{In \cite{Nesterov}, Nesterov-Scrimali significantly improved the convergence condition to $l < \frac{1}{\gamma}$ by solving a multitude of strongly monotone variational inequalities (VIs). More recently, under the same assumptions as in \cite{Nesterov}, Adly and Le \cite{al2} transformed the QVI into a VI, simplifying the problem to solving only a strongly monotone VI using the Douglas-Rachford splitting algorithm. In this paper, we propose a modification to the catching-up algorithm as follows:
\begin{equation}\label{mcua}
x_{n+1}={(\Id -v)^{-1}\Big({\rm proj}_{C}(x_n-v(x_n)-hf(x_n))\Big)}=(\Id-v)^{-1}\text{proj}_{K(x_n)}(x_n - hf(x_n)),
\end{equation}
aiming to achieve linear convergence under remarkably weaker conditions  than those outlined in \cite{al2} and \cite{Nesterov}, by considering the strong monotonicity of the pair $(f, \Id-v)$. It's important to note that our modified algorithm (\ref{mcua}) simplifies each iteration by requiring only one projection, eliminating the need to compute the resolvent of $f\circ (\Id-v)^{-1}$ as is done in \cite{al2}. While retaining the fundamental concepts of the original {catching-up algorithm} (\ref{cua}), our modification uses the inverse operator $(\Id-v)^{-1}$ to improve convergence rate with less stringent conditions. Indeed, the {catching-up} algorithm does not capture well the nature of QVIs as the algorithm (\ref{mcua}) does since the moving set depends on the state. Note that the proposed approach extends its applicability beyond scenarios where the strong monotonicity of $(f, \Id-v)$ is preserved, allowing a more general treatment where only the monotonicity of $(f, \Id-v)$, characterized by the absence of an obtuse angle between $f$ and $\Id-v$, is sufficient. Moreover, we even relax this condition by assuming the pseudo-monotonicity of $(f, \Id-v)$ where {the monotonicity} could be not observed.} As a consequence, we provide a new derivative-free to find a zero of a nonsmooth Lipschitz continuous mapping. 

{The paper is organized as follows. In Section 2, we revisit classical notions and provide essential definitions. In Section 3, we explore the asymptotic behavior of state-dependent sweeping processes. In Section 4, we introduce novel algorithms designed for solving  QVIs, covering both scenarios of $(f, \Id-v)$: strongly (pseudo) monotone and solely (pseudo) monotone cases. An interesting application of previous parts is presented in Section 5. Section 6 includes a collection of illustrated numerical examples. The paper concludes with a summary and some potential future directions of research.}

\section{Notation and Mathematical Backgrounds}
{In this paper, the framework is a real Hilbert space denoted as $H$, where the inner product  and the associated norm are respectively denoted by $\langle \cdot, \cdot \rangle$ and $\Vert \cdot \Vert$. Given a closed convex subset $C$ of $H$ and a point $x$ within $C$, we define the normal cone to $C$ at $x$ as follows:
\[
{\rm N}_C(x) = \{x^* \in H: \langle x^*, y-x \rangle \leq 0, \text{ for all } y \in C\}.
\]
Additionally, we define the projection from a point $z$ onto $C$ as:
\[
\text{proj}_C(z) = \{x \in C \text{ such that } d(z,C) = \|z - x\|\}.
\]
Here, $d(z,C)$ represents the distance from point $z$ to set $C$.
}


\begin{definition}
\noindent  A  mapping  $A: H\to H$ is called $L$-Lipschitz continuous ($L>0$) if
$$
\Vert Ax-Ay\Vert \le L \Vert x-y \Vert\;\;\forall\;x, y\in H.
$$
It is called contractive if it is $L$-Lipschitz continuous with $L<1$.\\

\noindent  It is called  $\mu$-strongly monotone ($\mu>0$) if
$$
\langle  Ax-Ay, x-y \rangle \ge \mu\Vert x-y \Vert^2\;\;\forall\;x, y\in H.
$$
\end{definition}
Similarly  let us recall about the monotonicity of set-valued operators. 
\begin{definition}
\noindent {A set-valued mapping $F: {H}\rightrightarrows {H}$  is called monotone provided
$$
\langle x^*-y^*,x-y \rangle \ge 0 \;\;\forall \;x, y\in H, x^*\in F(x) \;{\rm and}\;y^*\in F(y).
$$
In addition, it is called maximal monotone if there is no monotone operator $G$ such that the graph of $F$ is strictly {included} in  the graph of $G$. }
\end{definition}

\noindent {The resolvent  of $F$ is defined respectively by
$
J_B:=(Id+B)^{-1}
$}. It is easy to see that the resolvent is non-expansive. \\

Finally we introduce the monotonicity of a pair of functions, which plays an important role in the study of QVIs. 
\begin{definition}\label{pair-monot}
\begin{itemize}
 \item The pair of {maps} $(A_1, A_2)$ is called  monotone if 
\beq\label{pair_mon}
\langle  A_1x-A_1y, A_2x-A_2y \rangle \ge  0\;\;\forall\;x, y\in H.
\eeq
It means that the increments of $A_1$ and $A_2$ do not form an obtuse angle.\\

\item The pair $(A_1, A_2)$ is called $\mu$-strongly monotone if 
$$
\langle  A_1x-A_1y, A_2x-A_2y \rangle \ge  \mu \Vert x-y \Vert^2\;\;\forall\;x, y\in H.
$$
\item $(A_1, A_2)$ is called pseudo-monotone if 
$$
\langle  A_1x, A_2y-A_2x \rangle \ge 0 \Rightarrow \langle  A_1y, A_2y-A_2x \rangle \ge 0\;\;\forall\;x, y\in H.
$$
It is easy to see that if $(A_1, A_2)$ is monotone then it is pseudo-monotone.\\

\item $(A_1, A_2)$ is called $\mu$-strongly pseudo-monotone if 
$$
\langle  A_1x, A_2y-A_2x \rangle \ge 0 \Rightarrow \langle  A_1y, A_2y-A_2x \rangle \ge  \mu \Vert x-y \Vert^2\;\;\forall\;x, y\in H.
$$
\end{itemize}
\end{definition}
\begin{remark}\normalfont {
(i) We observe that if one of the two operators in Definition \ref{pair-monot} is the identity operator, the definition in \eqref{pair_mon} simplifies to the standard monotonicity for the remaining operator.\\
(ii) If the two operators are linear, then  $(A_1, A_2)$-monotonicity is equivalent to
$$
\langle  A_1x, A_2x \rangle \ge  0\;\;\forall\;x\in H.
$$
For example the following pair of matrices are $2$-strongly monotone (and hence monotone)
$$
A_1 = \begin{bmatrix}
2 & 1 \\
1 & 2
\end{bmatrix}, 
\quad
A_2 = \begin{bmatrix}
3 & 1 \\
1 & 3
\end{bmatrix}.
$$
(iii) Consider the nonlinear operators $A_1,\,A_2:\R^2\to\R^2$ defined by
\[
A_1(x_1, x_2) = (-x_1^2, 0) \mbox{ and } A_2(x_1, x_2) = (0, x_2^2).
\]
We have, for all \(x, y \in \mathbb{R}^2\), 
\[
\langle A_1x - A_1y, A_2x - A_2y \rangle = 0,
\]
which proves that the pair $(A_1,A_2)$ is monotone but not strongly monotone.
}

\end{remark}

\section{Asymptotic Behavior of State-Dependent Sweeping Processes}
{We start this section by outlining the foundational assumptions that will guide our exploration into the asymptotic behavior of state-dependent sweeping processes.}
%
\vskip 2mm
\noindent {\textbf{Assumption 1}: Let \(K(x) = C + v(x)\), where \(C\) is a non-empty, closed, and convex subset of \(H\) and \(v: H \to H\) is  \(l\)-Lipschitz continuous. In addition,    the inverse operator \((\Id  - v)^{-1}\) is well-defined and  \(\tilde{l}\)-Lipschitz continuous.} 
\vskip 2mm
\noindent \textbf{Assumption 2}: $f:H\to H$  is $L$-Lipschitz continuous.\\
\vskip 2mm
\noindent \textbf{Assumption 3}: The pair $(f, \Id -v)$ is $\gamma$-strongly monotone.
\begin{remark}\normalfont
If $f$  is $\mu$-strongly monotone and $l\le\mu/L$ as usually used in the literature (see, e.g., \cite{al2,Nesterov}), then Assumptions 1, 3 are satisfied with $\tilde{l}=\frac{1}{1-l}$ and $\gamma=\mu-lL$. Indeed, for all $x, y\in H$, one has
$$
\Vert (\Id -v)^{-1}x-(\Id -v)^{-1}y\Vert\le \frac{1}{1-l} \Vert x-y \Vert
$$
and
$$
\langle f(x)-f(y), x-y-v(x)+v(y) \rangle\ge (\mu-lL) \Vert x-y\Vert^2.
$$
\end{remark}
The following result shows that the monotonicity of the pair  $(f, \Id -v)$ is equivalent to the monotonicity of $f\circ (\Id -v)^{-1}$.
{
\begin{lemma}\label{lemmon}
\def\labelenumi{{\rm (\roman{enumi})}}
\begin{enumerate}
    \item $f\circ(\Id -v)^{-1}$ is (pseudo) monotone if and only if the pair $(f, \Id -v)$ is (pseudo) monotone. Furthermore, if the pair $(f, \Id -v)$ is $\gamma$-strongly monotone, then $f\circ(\Id -v)^{-1}$ is $\frac{\gamma}{(1+l)^2}$-strongly monotone.
    \item If $v$ is a linear operator, then   $(f, \Id -v)$ is (strongly) monotone $\Leftrightarrow$  $(\Id -v)^T\circ f$ is (strongly) monotone.
    \item If $f$ is a linear operator, then    $(f, \Id -v)$ is (strongly) monotone $\Leftrightarrow$ $f^T\circ (\Id -v)$ is (strongly) monotone.
\end{enumerate}
\end{lemma}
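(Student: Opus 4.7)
The plan is to reduce each of the three parts to a simple change-of-variable or adjoint identity. For part (i), I would introduce the substitution $u = (\Id-v)(x)$ and $w = (\Id-v)(y)$; by Assumption 1, this parametrizes $H$ bijectively, with $x = (\Id-v)^{-1}(u)$ and $y = (\Id-v)^{-1}(w)$. The key identity
\[
\langle f(x) - f(y),\, (\Id-v)(x) - (\Id-v)(y)\rangle \;=\; \langle f\!\circ\!(\Id-v)^{-1}(u) - f\!\circ\!(\Id-v)^{-1}(w),\, u-w\rangle
\]
immediately yields the equivalence in the monotone case. For the pseudo-monotone case the same substitution converts $\langle f(x),\, (\Id-v)(y) - (\Id-v)(x)\rangle$ into $\langle f\!\circ\!(\Id-v)^{-1}(u),\, w - u\rangle$, and similarly for the conclusion, so the two implications mirror each other on the nose.

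For the quantitative strong-monotonicity bound in (i), I would combine the hypothesis $\langle f(x) - f(y),\, (\Id-v)(x) - (\Id-v)(y)\rangle \geq \gamma\|x-y\|^2$ with the elementary estimate
\[
\|u - w\| \;=\; \|(x-y) - (v(x) - v(y))\| \;\leq\; (1+l)\,\|x-y\|,
\]
coming from the $l$-Lipschitz continuity of $v$. Squaring gives $\|x-y\|^2 \geq \|u-w\|^2/(1+l)^2$, and substituting back produces $\langle f\!\circ\!(\Id-v)^{-1}(u) - f\!\circ\!(\Id-v)^{-1}(w),\, u-w\rangle \geq \frac{\gamma}{(1+l)^2}\|u-w\|^2$, which is exactly the advertised strong-monotonicity constant.

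For (ii) and (iii) the only tool needed is that a linear operator can be shifted across an inner product via its adjoint. When $v$ is linear, $(\Id-v)(x) - (\Id-v)(y) = (\Id-v)(x-y)$, hence
\[
\langle f(x) - f(y),\, (\Id-v)(x-y)\rangle \;=\; \langle (\Id-v)^T\!\circ\! f(x) - (\Id-v)^T\!\circ\! f(y),\, x-y\rangle,
\]
which gives (ii). Part (iii) is symmetric: when $f$ is linear, $f(x)-f(y) = f(x-y)$, and transposing $f$ rewrites the left-hand side as $\langle f^T\!\circ\!(\Id-v)(x) - f^T\!\circ\!(\Id-v)(y),\, x-y\rangle$. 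Since the right-hand side $\gamma\|x-y\|^2$ of the strong-monotonicity inequality is unaffected by these rearrangements, both equivalences transfer verbatim, with the same constant $\gamma$.

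I do not expect a genuine obstacle here. The two points that require care are (a) verifying that the substitution in (i) is legitimate, which is precisely what the well-definedness of $(\Id-v)^{-1}$ in Assumption 1 provides, and (b) keeping the direction of the pseudo-monotonicity implication aligned after swapping $(x,y)$ for $(u,w)$. The rest is essentially bookkeeping with the inner product.
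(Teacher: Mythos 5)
Your proposal is correct and follows essentially the same route as the paper's proof: the change of variables $u=(\Id-v)(x)$, $w=(\Id-v)(y)$ together with the identity relating the two inner products for part (i), the bound $\|u-w\|\le(1+l)\|x-y\|$ for the strong-monotonicity constant, and the adjoint identity for parts (ii) and (iii). The only difference is that you spell out the pseudo-monotone bookkeeping and the constant-preservation in (ii)--(iii) more explicitly than the paper does, which is harmless.
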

}

\begin{proof}
(i) Let $x_i \in H$ and $y_i=(\Id -v) (x_i), \;i=1,2$. We have 
\baqn
\langle f(x_1)-f(x_2), (\Id -v) (x_1-x_2)\rangle
= \langle f(\Id -v)^{-1}(y_1)-f(\Id -v)^{-1}(y_2), y_1-y_2\rangle.
\eaqn
Thus $f\circ(\Id -v)^{-1}$ is (pseudo) monotone if and only if the pair $(f, \Id -v)$ is (pseudo) monotone.
Since $\Vert y_1-y_2 \Vert \le (1+l)\Vert x_1-x_2 \Vert$, if $(f, \Id -v)$ is $\gamma$-strongly monotone, then $f\circ(\Id -v)^{-1}$ is $\frac{\gamma}{(1+l)^2}$-strongly monotone. \\
(ii) If $v$ is linear then
$$
\langle (\Id -v)^T(f(x_1)-f(x_2)),x_1-x_2\rangle=\langle f(x_1)-f(x_2), (\Id -v) (x_1-x_2)\rangle
$$
and the remain follows.\\
(iii) Similar to the case where $f$ is linear. 
\end{proof}

\begin{remark}\normalfont
{In practice, we often encounter scenarios where  $v$ or $f$ is a large matrice, making it convenient to verify the (strong) monotonicity of $(f,\Id -v)$}.
\end{remark}
\noindent Note that under the setting of Assumption 1,  (\ref{main}) can be rewritten as follows
\beq\label{main1}
\left\{
\begin{array}{l}
   \dot{x}(t) \in  -f(x(t))-{\rm N}_{C}(x(t)-v(x(t)))\\ \\
   x(0)=x_0.
\end{array}\right.
\eeq
\begin{lemma}\label{lem1}
Let Assumptions 1, 2, 3 hold. Then the system  (\ref{main1})  has exactly an equilibrium point $x^*$, i.e., satisfying $0\in f(x^*)+{\rm N}_{K(x^*)}(x^*)$, with $K(x^*)=C+v(x^*)$.

\end{lemma}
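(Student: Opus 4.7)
The plan is to reduce the equilibrium problem to a classical strongly monotone variational inequality on the fixed closed convex set $C$, and then invoke the standard existence/uniqueness result. Concretely, I rewrite the equilibrium condition
$$0 \in f(x^*) + {\rm N}_{K(x^*)}(x^*) = f(x^*) + {\rm N}_C(x^* - v(x^*))$$
by introducing the change of variable $y^* = (\Id - v)(x^*)$. Since $(\Id - v)^{-1}$ is well-defined by Assumption 1, this is a bijection, and the condition becomes
$$0 \in g(y^*) + {\rm N}_C(y^*), \qquad \text{where } g := f \circ (\Id - v)^{-1}.$$
So it suffices to show that this VI has a unique solution $y^* \in C$.

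Next I would verify that $g$ inherits the structural properties needed. From Assumption 2 and Assumption 1, $g$ is $L\tilde{l}$-Lipschitz continuous on $H$. From Assumption 3 combined with Lemma \ref{lemmon}(i), $g$ is $\frac{\gamma}{(1+l)^2}$-strongly monotone. Thus the inclusion above fits the standard framework of a strongly monotone Lipschitz VI on a nonempty closed convex set.

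The existence and uniqueness of $y^*$ then follows from the classical projected fixed-point argument: for $\lambda > 0$, $y^*$ solves the VI if and only if $y^* = {\rm proj}_C(y^* - \lambda g(y^*))$; the map $T_\lambda(y) := {\rm proj}_C(y - \lambda g(y))$ is a contraction on $H$ when $\lambda$ is chosen small enough (standard computation yields a contraction ratio below $1$ thanks to strong monotonicity and Lipschitz continuity, e.g. for any $\lambda \in (0, 2\gamma/(L\tilde{l}(1+l))^2)$). Banach's contraction principle provides a unique fixed point $y^* \in C$, and setting $x^* := (\Id - v)^{-1}(y^*)$ yields the unique equilibrium of \eqref{main1}, with $x^* - v(x^*) = y^* \in C$ so that $x^* \in K(x^*)$.

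I do not expect any serious obstacle: the reduction via $y = (\Id - v)(x)$ is direct, and once Lemma \ref{lemmon} delivers strong monotonicity of $g$, everything is standard. The only small care needed is to keep track of the Lipschitz constants of $g$ and to ensure the uniqueness statement is phrased for $x^*$ (which follows because $(\Id - v)^{-1}$ is a bijection, so uniqueness of $y^*$ transfers to uniqueness of $x^*$).
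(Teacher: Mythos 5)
Your proposal is correct and follows essentially the same route as the paper: the same change of variable $y^*=(\Id-v)(x^*)$ reduces the equilibrium condition to the inclusion $0\in f\circ(\Id-v)^{-1}(y^*)+{\rm N}_C(y^*)$, and Lemma \ref{lemmon} supplies the strong monotonicity of $f\circ(\Id-v)^{-1}$. The only (immaterial) difference is the final existence/uniqueness step, where the paper invokes surjectivity of the strongly maximal monotone operator $f\circ(\Id-v)^{-1}+{\rm N}_C$ while you use the equivalent projected fixed-point contraction argument.
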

\begin{proof}
 We have
$$
0\in f(x^*)+{\rm N}_{K(x^*)}(x^*)= f(x^*)+{\rm N}_C(x^*-v(x^*))=f\circ (\Id -v)^{-1}(y^*)+{\rm N}_C(y^*)
$$
where $y^*=(\Id -v)(x^*)$. The mapping $f\circ (\Id -v)^{-1}$ is strongly monotone (Lemma \ref{lemmon}) and Lipschitz continuous. Thus $\Big(f\circ(\Id -v)^{-1}+{\rm N}_C\Big)$ is a  strongly maximal monotone  operator. Hence $y^*=(f\circ(\Id -v)^{-1}+{\rm N}_C)^{-1}(0)$  exists uniquely and so is $x^*$.
\end{proof}

{The subsequent result outlines a condition on $v$ which ensures that both the trajectory $t\mapsto x(t)$ of the SWP (\ref{main}) and its velocity $\dot{x}(\cdot)$ converge (the latter to  zero) at an exponential rate}.
\begin{theorem}
Let Assumptions 1, 2, 3 hold and suppose that $v$ is linear, symmetric and {contractive}. Then 
$$
\Vert x(t)-x^*\Vert\le \beta e^{-\alpha t}\Vert x_0-x^*\Vert \;\;\forall t\ge 0
$$
for some $\alpha>0, \beta>0$. In addition, we have
$$
\Vert \dot{x}(t)\Vert\le Ce^{-\alpha t}\Vert \dot{x}(t_0)\Vert \;\;a.e. \,t\ge t_0
$$
for some $C>0$ where $t_0$ is the first time that $x$ is differentiable. 
\end{theorem}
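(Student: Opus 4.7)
The idea is to use the weighted Lyapunov function
\[
V(t) \;=\; \bigl\langle x(t)-x^{*},\,(\Id-v)(x(t)-x^{*})\bigr\rangle,
\]
which is well-adapted to the geometry of the problem because, when $v$ is linear and symmetric, $\Id-v$ is self-adjoint, and when in addition $v$ is contractive with constant $l<1$, the spectrum of $\Id-v$ lies in $[1-l,1+l]$. Hence
\[
(1-l)\,\|x(t)-x^{*}\|^{2} \;\le\; V(t) \;\le\; (1+l)\,\|x(t)-x^{*}\|^{2},
\]
so $V$ is equivalent to $\|x(t)-x^{*}\|^{2}$.

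Next I would differentiate $V$ along the trajectory. Rewriting \eqref{main} as \eqref{main1}, we pick a measurable selection $\xi(t)\in {\rm N}_{C}\bigl(x(t)-v(x(t))\bigr)$ with $\dot x(t)=-f(x(t))-\xi(t)$, and correspondingly $\xi^{*}\in {\rm N}_{C}(x^{*}-v(x^{*}))$ with $f(x^{*})+\xi^{*}=0$ from Lemma \ref{lem1}. Using symmetry of $\Id-v$,
\[
\tfrac12\dot V(t) \;=\; \bigl\langle(\Id-v)(x(t)-x^{*}),\,\dot x(t)\bigr\rangle
\;=\;-\bigl\langle(\Id-v)(x(t)-x^{*}),\,f(x(t))-f(x^{*})\bigr\rangle
\;-\;\bigl\langle(\Id-v)(x(t)-x^{*}),\,\xi(t)-\xi^{*}\bigr\rangle.
\]
The first bracket is bounded below by $\gamma\|x(t)-x^{*}\|^{2}$ thanks to Assumption 3, and the second is non-negative because $(\Id-v)(x(t)-x^{*})=(x(t)-v(x(t)))-(x^{*}-v(x^{*}))$ and ${\rm N}_{C}$ is monotone. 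Hence $\dot V(t)\le -2\gamma\|x(t)-x^{*}\|^{2}\le -\frac{2\gamma}{1+l}V(t)$, and Gr\"onwall's inequality together with the norm equivalence yields the first assertion with $\alpha=\gamma/(1+l)$ and $\beta=\sqrt{(1+l)/(1-l)}$.

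For the velocity estimate I would exploit the autonomy of \eqref{main}. Fix $h>0$; since the system is time-invariant, $t\mapsto x(t+h)$ is also a solution (with initial datum $x(h)$). Applying the computation above to the pair of solutions $x(\cdot)$ and $x(\cdot+h)$, with the Lyapunov function $\langle x(t+h)-x(t),(\Id-v)(x(t+h)-x(t))\rangle$, the same cancellations go through (the normal-cone term is non-negative by monotonicity, the strong monotonicity of $(f,\Id-v)$ gives the dissipation), and one obtains
\[
\|x(t+h)-x(t)\|\;\le\;\beta\,e^{-\alpha(t-t_{0})}\,\|x(t_{0}+h)-x(t_{0})\|\qquad\text{for }t\ge t_{0}.
\]
Dividing by $h$ and letting $h\downarrow0$ along a sequence on which both sides have classical derivatives (which is legitimate since the solution is Lipschitz continuous so $\dot x$ exists a.e.) yields the announced bound with $C=\beta$.

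The main technical point is the justification that the ``two-solutions'' Lyapunov argument genuinely gives a pointwise inequality for $\|\dot x\|$ after the limit $h\to 0$: one needs Lebesgue differentiation together with the a.e.~existence of $\dot x$ guaranteed by the well-posedness theory for \eqref{main} under $l<1$. Everything else is a direct computation relying on the symmetry of $\Id-v$ (to get the factor $2$ when differentiating $V$), its positive definiteness (to get norm equivalence), and the pair strong monotonicity of Assumption 3.
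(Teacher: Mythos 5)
Your proposal is correct and follows essentially the same route as the paper: the weighted Lyapunov function $\langle (\Id-v)(x-x^{*}),x-x^{*}\rangle$ (the paper uses $B=(\Id-v)/2$, a harmless normalization), dissipation from the monotonicity of ${\rm N}_C$ plus the pair strong monotonicity, Gr\"onwall, and the time-shift/difference-quotient argument for the velocity. The constants $\alpha,\beta$ you obtain differ slightly from the paper's ($\gamma/2$ and $\sqrt{2/(1-l)}$ there) but both choices are valid for the stated conclusion.
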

\begin{proof} Define $V(t):=\langle Bx(t)-Bx^*,x(t)-x^*\rangle$ where $B=(\Id -v)/2$ is a symmetric, positive definite linear operator.  Obviously 
$$
\frac{1-l}{2}   \Vert x(t)-x^*\Vert^2\le V(x(t)) \le  \Vert x(t)-x^*\Vert^2.
$$
We have 
$$
\dot{{x}}(t) +f(x(t))\in  -{\rm N}_{C}(x(t)-v(x(t))) \;\;{\rm and }\; f(x^*) \in  -{\rm N}_C(x^*-v(x^*)).
$$
The monotonicity of ${\rm N}_C$ implies that 
\beq
\langle \dot{{x}}(t) +f(x(t))-f(x^*), x(t)-x^*-v(x(t))-v(x^*)\rangle \le 0.
\eeq
From the $\gamma$-strong monotonicity of $(f, \Id -v)$, one has 
\baqn
\frac{dV}{dt}(t)&=&\langle \dot{{x}}(t), 2B(x(t)-x^*)\rangle\le - \langle f(x(t))-f(x^*), x(t)-x^*-v(x(t))-v(x^*)\rangle \\
&\le & -\gamma \Vert x(t)-x^*\Vert^2\le -{\gamma} V(x(t)).
\eaqn
Using Gronwall's inequality, one has
$$
\frac{1-l}{2} \Vert x(t)-x^*\Vert^2\le V(x(t))\le e^{-{\gamma t}}V(x_0)\le e^{-{\gamma t}}\Vert x_0-x^*\Vert^2
$$
and the conclusion follows with $\alpha={\gamma}/{2}$ and $\beta=\sqrt{2/(1-l)}$. \\

Since the solution $x$ is absolutely continuous, it is differentiable almost everywhere. Similarly, we can consider the function $W(t)=\langle Bx(t+\epsilon)-Bx(t),x(t+\epsilon)-x(t)\rangle$ for some fixed $\epsilon>0$ and obtain that 
$$
\Vert x(t+\epsilon)-x(t)\Vert\le \beta e^{-\alpha t}\Vert x(t_0+\epsilon)-x(t_0)\Vert 
$$
which is equivalent to
$$
\Vert \frac{x(t+\epsilon)-x(t)}{\epsilon}\Vert\le \beta e^{-\alpha t}\Vert \frac{x(t_0+\epsilon)-x(t_0)}{\epsilon}\Vert.
$$
One has the conclusion by letting $\epsilon\to 0$. 
\end{proof}
\begin{remark}\normalfont
{To achieve exponential convergence for the trajectory of (\ref{main}), one can employ the technique described in \cite{al2}. This approach becomes effective when the Lipschitz constant, denoted as $l$, of the function $v$ satisfies the condition $l < \frac{\mu}{L}$, where $f$ exhibits $\mu$-strong monotonicity. However, it's worth noting that in such cases, $l$ tends to be very small.\\
Fortunately, with the introduction of the new concept of monotonicity for the pair $(f, v)$, we can now work with a much more lenient requirement: $l < 1$. This condition appears to be optimal, as having $l \geq 1$ can potentially disrupt the dissipation of the system, making it challenging to find a suitable Lyapunov function.\\
It's important to mention that when dealing with algorithms designed to solve the QVI (\ref{qvi}), the need for $l < 1$ is even further relaxed, as there's no requirement to find a Lyapunov function in this context.
}
\end{remark}
\section{New  Algorithms for Solving QVIs}
\normalfont {It is easy to see that if $x^*\in K(x^*)$ is an equilibrium point of \eqref{main1}, then $y^*=x^*-v(x^*)$ is a solution of the following variational inequality
\beq\label{vire}
{\rm VI}(T,C)\left\{
\begin{array}{l}
\mbox{Find }y\in C \mbox{ such that }\\
0\in T(y)+{\rm N}_C(y),
\end{array}
\right.
\eeq
with $T=f\circ (\Id-v)^{-1}$. Conversely, if $y^*\in C$ is a solution of ${\rm VI}(T,C)$, then $x^*=(\Id-v)^{-1}(y^*)$ is an equilibrium point of \eqref{main1}.
} Therefore, to solve the QVI  (\ref{qvi}), the strong monotonicity or even only monotonicity of $f$ is not required. All we need is the monotonicity of $f(\Id -v)^{-1}$, or equivalently the monotonicity of $(f, \Id -v)$.
\begin{remark}\label{rem5}\normalfont
In one dimension we let $f(x)=-x+\frac{1}{3}\sin x$, $C=\R^+$ and $v(x)=2x+\frac{1}{3}\cos x$. Clearly, $f$ is not monotone. However 
\baqn
&&\langle f(x_1)-f(x_2), (\Id -v) (x_1-x_2)\rangle\\
&=&\langle -(x_1-x_2)+\frac{1}{3}(\sin x_1-\sin x_2),  -(x_1-x_2)-\frac{1}{3}(\cos x_1-\cos x_2)\rangle\\
&\ge& (1-\frac{1}{3}-\frac{1}{3}-\frac{1}{9})\Vert x_1-x_2\Vert^2\ge \frac{2}{9} \Vert x_1-x_2\Vert^2.
\eaqn
 Thus  $(f,\Id -v)$ is strongly monotone and the QVI (\ref{qvi}) is still solvable.  
 
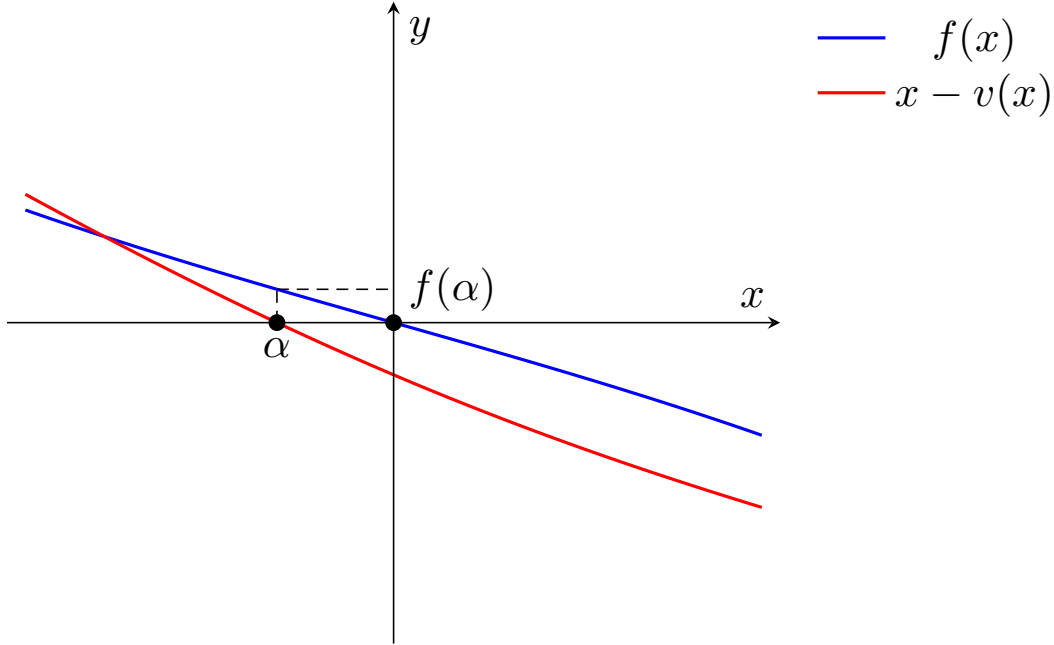
\begin{figure}
\begin{center}
 \begin{tikzpicture}[scale=1.5]
\begin{axis}[
    xlabel={$x$},
    ylabel={$y$},
    xmin=-1, xmax=1,
    ymin=-2, ymax=2,
    axis lines=center,
    axis on top=true,
    domain=-1:1,
    samples=100,
    ticks=none, 
    enlargelimits={abs=0.05},
    legend pos=outer north east,
    legend style={draw=none},
]

\addplot[blue, thick]{-x + (1/3) * sin(deg(x))};
\addlegendentry{$f(x)$}

\addplot[red, thick]{x - (2*x + (1/3) * cos(deg(x)))};
\addlegendentry{$x - v(x)$}

\node[shape=circle,fill=black,inner sep=1.5pt] at (axis cs:-0.3168,0) {};
\node[below] at (axis cs:-0.3168,0) {$\alpha$};

\node[shape=circle,fill=black,inner sep=1.5pt] at (axis cs:0,0) {};

\pgfmathsetmacro{\fOfAlpha}{-(-0.3168) + (1/3) * sin(deg(-0.3168))}

\draw[densely dashed, black] (axis cs:-0.3168,0) -- (axis cs:-0.3168,\fOfAlpha);
\draw[densely dashed, black] (axis cs:-0.3168,\fOfAlpha) -- (axis cs:0,\fOfAlpha);

\pgfmathsetmacro{\fAlpha}{-(-0.3168) + (1/3)*sin(deg(-0.3168))}

\node[right] at (axis cs:0,\fAlpha) {$f(\alpha)$};

\end{axis}
\end{tikzpicture}
\caption{Plot of the functions  $f(x)$ and $x - v(x)$}
\label{alpha}
\end{center}
\end{figure}
\noindent
{The only solution for the QVI (\ref{qvi}) is found to be the fixed point of the function $v$, which we call $\alpha$ and is approximately $-0.3168$ (see Figure \ref{alpha}). Basically, to solve the QVI (\ref{qvi}), we look for a solution to the problem $0\in f(x)+{\rm N}_{\mathbb{R}^+}(x-v(x))$. This can be rewritten equivalently as
\begin{equation}\label{qvimax}
\max\big(x-v(x)-f(x), 0\big) = x-v(x).
\end{equation}
If $x-v(x)-f(x) \geq 0$, then equation \eqref{qvimax} implies that $f(x) = 0$, which would make $x = 0$. But $x = 0$ doesn't work because $0 - v(0) - f(0) = -\frac{1}{3} < 0$. If, however, $x-v(x)-f(x) \leq 0$, then \eqref{qvimax} leads us to $v(x) = x$. This means that our solution is $x = \alpha$, since for $\alpha$ the condition $\alpha - v(\alpha) - f(\alpha) < 0$ is true (because $f(\alpha) > 0$).\\
The continuous trajectory $t\mapsto x(t)$, $t\in [0,T]$ of \eqref{main} is plotted in Figure \ref{swp_rem5}. We observe that the trajectory converges to $\alpha$.
}

\begin{figure}
\begin{center}
\includegraphics[width=4in]{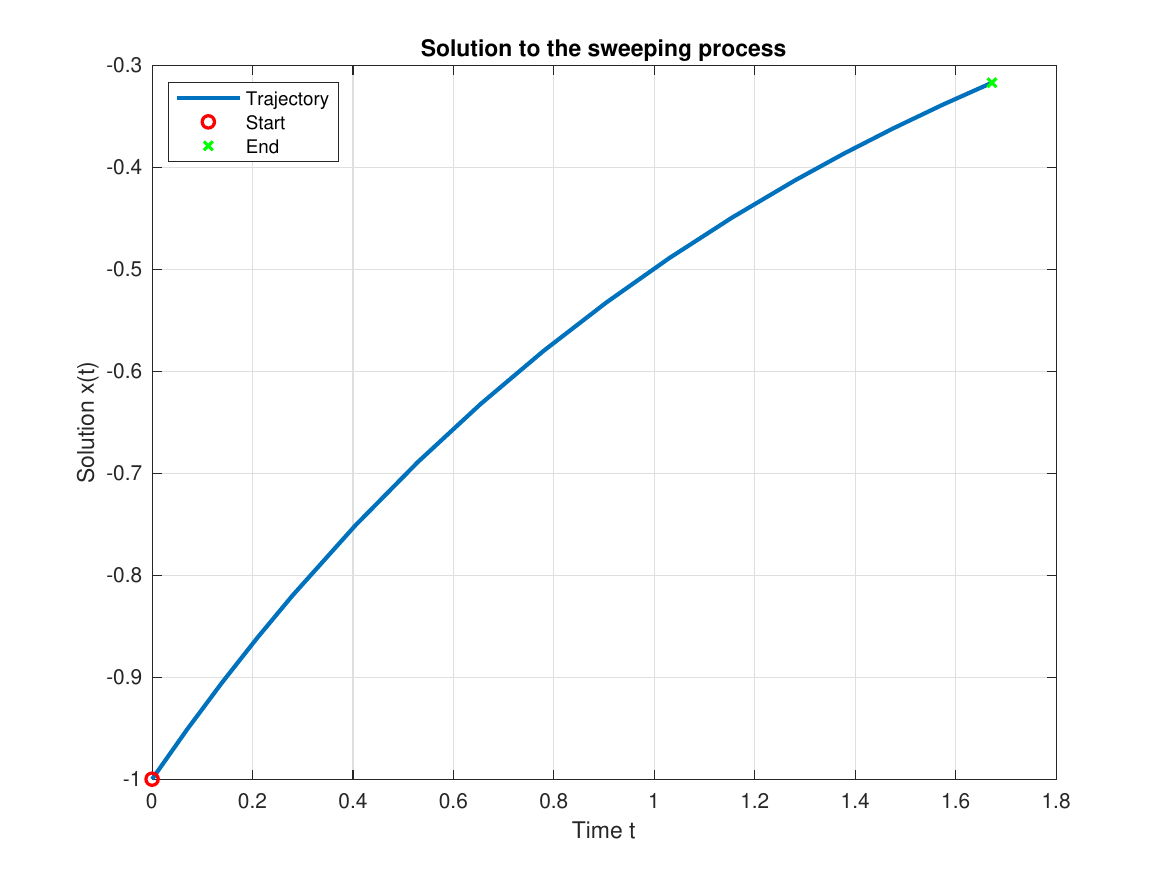}
\caption{Trajectory solution $t\mapsto x(t)$  to the sweeping process  \eqref{main} with the data of Remark \ref{rem5}.} 
\label{swp_rem5}
\end{center}
\end{figure}

\end{remark}
\subsection{The strong monotonicity case}
Under the strong monotonicity of $(f, \Id -v)$, we provide an algorithm to solve the QVI  (\ref{qvi}) with linear convergence rate as follows. \\

\noindent \textbf{Algorithm \;1}: Given $x_0\in H$, for $n\ge 0$, we compute 
\beq
x_{n+1}=(\Id -v)^{-1}\Big({\rm proj}_{C}(x_n-v(x_n)-hf(x_n))\Big)=(\Id -v)^{-1}\Big({\rm proj}_{K(x_n)}(x_n-hf(x_n))\Big).
\eeq

\begin{theorem}\label{ro1}
Suppose that  Assumptions 1, 2, 3 hold. Let $(x_n)$ be the sequence generated by Algorithm 1 with $h:=\frac{\alpha }{L^2 \tilde{l}^2}$. Then we have 
$$
\Vert x_n-x^*\Vert \le {\tilde{l}(1+l)} \kappa^n\Vert x_0-x^*\Vert,
$$
where $\kappa:=\sqrt{1-\frac{\alpha^2}{L^2\tilde{l}^2}}$, $\alpha:=\frac{\gamma}{(1+l)^2}$ and $x^*$ is the unique solution of the QVI (\ref{qvi}).
\end{theorem}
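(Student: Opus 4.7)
The plan is to reduce Algorithm 1 to the classical projected-gradient iteration for a strongly monotone variational inequality on the fixed set $C$, and then invoke the standard one-step contraction estimate.

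First, I would introduce the change of variables $y_n := (\Id - v)(x_n)$ and $y^* := (\Id - v)(x^*)$. Since $K(x) = C + v(x)$, the update in Algorithm 1 can be rewritten as
\[
y_{n+1} = \text{proj}_C\bigl(y_n - h\,T(y_n)\bigr), \qquad T := f \circ (\Id - v)^{-1},
\]
and the equivalence between the QVI \eqref{qvi} and the VI \eqref{vire} (explained in the paragraph preceding Remark \ref{rem5}) yields the fixed-point identity $y^* = \text{proj}_C(y^* - h\,T(y^*))$.

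Second, I would collect the regularity of $T$. By Lemma \ref{lemmon}(i), $T$ is $\alpha$-strongly monotone with $\alpha = \gamma/(1+l)^2$. By Assumptions 1 and 2, $T$ is the composition of an $L$-Lipschitz map with a $\tilde l$-Lipschitz map, hence $L\tilde l$-Lipschitz. Combining the nonexpansiveness of $\text{proj}_C$ with the standard expansion
\[
\|(I - hT)(y_n) - (I - hT)(y^*)\|^2 = \|y_n - y^*\|^2 - 2h\langle T(y_n) - T(y^*), y_n - y^*\rangle + h^2\|T(y_n) - T(y^*)\|^2
\]
and plugging in strong monotonicity and the Lipschitz bound, I obtain
\[
\|y_{n+1} - y^*\|^2 \le \bigl(1 - 2h\alpha + h^2 L^2 \tilde l^{\,2}\bigr) \|y_n - y^*\|^2.
\]
The choice $h = \alpha/(L^2\tilde l^{\,2})$ minimizes the bracket and produces the contraction factor $\kappa = \sqrt{1 - \alpha^2/(L^2\tilde l^{\,2})} < 1$ (positivity of the argument is automatic, since the strong monotonicity constant of $T$ cannot exceed its Lipschitz constant).

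Third, I would transfer the estimate back to $x_n$. The $\tilde l$-Lipschitz continuity of $(\Id - v)^{-1}$ gives $\|x_n - x^*\| \le \tilde l\,\|y_n - y^*\|$, while $\Id - v$ is $(1+l)$-Lipschitz, so $\|y_0 - y^*\| \le (1+l)\|x_0 - x^*\|$. Iterating the one-step bound then yields
\[
\|x_n - x^*\| \le \tilde l\,\kappa^n\,\|y_0 - y^*\| \le \tilde l(1+l)\,\kappa^n\,\|x_0 - x^*\|,
\]
which is the claimed inequality. The only real obstacle is bookkeeping: correctly propagating the constants $\tilde l$, $1+l$, and $\alpha/(1+l)^2$ through the change of variables, and verifying that $\kappa \in (0,1)$ for the stated step size; the linear convergence itself is an immediate consequence of the classical analysis applied to $T$.
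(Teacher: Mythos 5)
Your proposal is correct and follows essentially the same route as the paper's proof: the change of variables $y_n=(\Id-v)x_n$, the identification of the iteration as a projected-gradient step for $T=f\circ(\Id-v)^{-1}$ (which Lemma \ref{lemmon} makes $\alpha$-strongly monotone and $L\tilde l$-Lipschitz), the standard one-step contraction with $h=\alpha/(L^2\tilde l^{\,2})$, and the transfer back via the $\tilde l$- and $(1+l)$-Lipschitz bounds. Your added remark that $\kappa\in(0,1)$ holds automatically because the strong monotonicity constant of $T$ cannot exceed its Lipschitz constant is a small but welcome detail the paper leaves implicit.
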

\begin{proof}
Let $y_n:=(\Id -v)x_n$ and $g:=f(\Id -v)^{-1}$ then we have $y_{n+1}={\rm proj}_{C}(y_n-hg(y_n))$ and $g$ is $\alpha$-strongly monotone and ${L\tilde{l}}$- Lipschitz continuous (Lemma  \ref{lemmon}). On the other hand, let $y^*=(\Id -v)x^*$, we have $0\in g(y^*)+{\rm N}_C(y^*) \Leftrightarrow y^*={\rm proj}_{C}(y^*-hg(y^*))$. Since the projection operator is non-expansive and $g$ is $\alpha$-strongly monotone, we have 
\baqn
\Vert y_{n+1}-y^*\Vert^2&\le& \Vert y_n-hg(y_n)-y^*+hg(y^*)\Vert^2\\
&\le&\Vert y_n-y^*\Vert^2(1-2h\alpha+h^2{L^2\tilde{l}^2}).
\eaqn
By choosing $h:=\frac{\alpha }{L^2 \tilde{l}^2}$, we have 
\beq
\Vert y_{n+1}-y^*\Vert^2\le \Vert y_n-y^*\Vert^2(1-\frac{\alpha^2}{L^2\tilde{l}^2}),
\eeq
which deduces that 
$$
\Vert y_n-y^*\Vert\le {\kappa^n} \Vert y_0-y^*\Vert.
$$
Note that $\Vert x_n-x^*\Vert = \Vert (\Id -v)^{-1} (y_n-y^*) \Vert \le \tilde{l}\Vert y_n-y^*\Vert$ and $\Vert y_0-y^*\Vert = \Vert x_0-x^*-(v(x_0)-v(x^*))\Vert \le (1+l)\Vert x_0-x^*\Vert$ and  the conclusion follows. 
\end{proof}

\begin{remark}\normalfont
  Algorithm 1 keeps the basic of the catching-up algorithm with the additional operator $(\Id -v)^{-1}$. The inverse  operator $(\Id -v)^{-1}$ captures well the nature of the QVI  (\ref{qvi}) and thus allows the algorithm to obtain the linear convergence rate under very general conditions. Indeed our conditions are weaker than classical conditions (see, e.g., \cite{al2,Nesterov}) and thus weaker than the conditions used by  the catching-up algorithm.  In addition, we use only one projection at each step without computing the resolvent of the Lipschitz continuous mapping $f(\Id -v)^{-1}$, which keeps the spirit of forward-backward technique.\\
\end{remark}
The estimation  in the proof of Theorem \ref{ro1} is indeed applied for the operator $f(\Id -v)^{-1}$. The following result provides a better estimation by exploiting the strong monotonicity of $(f, \Id -v)$ directly. It is easy to see that $\rho$ in Theorem \ref{ro1im} does not depend on $\tilde{l}$ and is strictly smaller than $\kappa$ in  Theorem \ref{ro1} if $\tilde{l}(1+l)^2\ge 1$, which happens usually in practice. 
\begin{theorem}\label{ro1im}
Suppose that  Assumptions 1, 2, 3 hold. Let $(x_n)$ be the sequence generated by Algorithm 1. Then we have 
$$
\Vert x_n-x^*\Vert\le  {\tilde{l}(1+l)} \rho^n\Vert x_0-x^*\Vert,
$$
where $\rho:=\sqrt{1-\frac{\gamma^2}{L^2(1+l)^2}}$.
\end{theorem}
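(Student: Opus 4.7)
The plan is to retrace the argument of Theorem \ref{ro1} through the shifted variables $y_n = (\Id - v) x_n$ and $y^* = (\Id - v) x^*$, but to avoid routing the strong monotonicity through $g = f\circ (\Id-v)^{-1}$. Starting from $y_{n+1} = \mathrm{proj}_C(y_n - h g(y_n))$ and the fact that $y^*$ is a fixed point of $z \mapsto \mathrm{proj}_C(z - h g(z))$, nonexpansivity of the projection gives
$$
\|y_{n+1} - y^*\|^2 \le \|y_n - y^* - h(g(y_n) - g(y^*))\|^2.
$$
Expanding and noting that $g(y_n) - g(y^*) = f(x_n) - f(x^*)$ while $y_n - y^* = (\Id-v)(x_n) - (\Id-v)(x^*)$, the cross term is exactly $\langle f(x_n) - f(x^*),\, (\Id-v)(x_n) - (\Id-v)(x^*)\rangle$, which by Assumption~3 is at least $\gamma \|x_n - x^*\|^2$; the quadratic term is bounded by $L^2\|x_n - x^*\|^2$ via Assumption~2. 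This yields
$$
\|y_{n+1} - y^*\|^2 \le \|y_n - y^*\|^2 + (h^2 L^2 - 2h\gamma)\|x_n - x^*\|^2.
$$

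The key observation, and the reason this improves on Theorem~\ref{ro1}, is that both estimates land on $\|x_n - x^*\|^2$ rather than on $\|y_n - y^*\|^2$, so I should convert $\|x_n-x^*\|^2$ into $\|y_n-y^*\|^2$ in the favorable direction. Since $v$ is $l$-Lipschitz, $\|y_n - y^*\| \le (1+l)\|x_n - x^*\|$, hence $\|x_n - x^*\|^2 \ge \|y_n - y^*\|^2/(1+l)^2$. Provided $h^2 L^2 - 2h\gamma < 0$, i.e.\ $h < 2\gamma/L^2$, substituting gives
$$
\|y_{n+1} - y^*\|^2 \le \left(1 + \frac{h^2 L^2 - 2h\gamma}{(1+l)^2}\right)\|y_n - y^*\|^2.
$$

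Minimizing the bracket $h^2 L^2 - 2h\gamma$ over $h > 0$ picks out $h = \gamma/L^2$, with optimal value $-\gamma^2/L^2$, and the contraction factor becomes $\rho^2 = 1 - \gamma^2/(L^2 (1+l)^2)$. Iterating, $\|y_n - y^*\| \le \rho^n \|y_0 - y^*\|$. The conclusion follows by the same two conversions used at the end of Theorem~\ref{ro1}: the $\tilde{l}$-Lipschitz property of $(\Id-v)^{-1}$ gives $\|x_n - x^*\| \le \tilde{l}\|y_n - y^*\|$, and the $l$-Lipschitz property of $v$ gives $\|y_0 - y^*\| \le (1+l)\|x_0 - x^*\|$.

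There is no real obstacle here, only a decision about which direction to apply the bilipschitz estimate for $\Id-v$: at each iteration step, I want to turn the contraction budget (measured in $\|x_n - x^*\|$) into a contraction on $\|y_n - y^*\|$, which requires the upper bound $\|y_n-y^*\| \le (1+l)\|x_n-x^*\|$; only at the very end do I use $\tilde{l}$ to come back to $x$-norms. This asymmetry is precisely why the new rate no longer contains $\tilde{l}$ inside $\rho$, and why $\rho \le \kappa$ as soon as $\tilde{l}(1+l)^2 \ge 1$.
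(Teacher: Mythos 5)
Your proposal is correct and follows essentially the same route as the paper's proof: pass to $y_n=(\Id-v)x_n$, apply nonexpansivity of the projection, use the strong monotonicity of the pair $(f,\Id-v)$ directly on the cross term, choose $h=\gamma/L^2$, and convert $\|x_n-x^*\|^2\ge \|y_n-y^*\|^2/(1+l)^2$ to obtain the contraction factor $\rho$. The only difference is that you make explicit the sign condition $h<2\gamma/L^2$ needed before converting norms, which the paper leaves implicit.
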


\begin{proof}
Let $y_n:=(\Id -v)x_n$, $y^*:=(\Id -v)x^*$ then we have $y_{n+1}={\rm proj}_{C}(y_n-hf(x_n))$ and $y^*={\rm proj}_{C}((y^*-hf(x^*))$. The strong monotonicity of $(f,\Id -v)$ deduces that
\beq
\langle y_n-y^*,f(x_n)-f(x^*)\rangle \ge \gamma \Vert x_n-x^*\Vert^2.
\eeq
By choosing $h=\gamma/L^2$, one has
\baqn
\Vert y_{n+1}-y^*\Vert^2&\le& \Vert y_n-y^*-h(f(x_n)-f(x^*))\Vert^2\\
&\le& \Vert y_n-y^*\Vert^2-2\gamma h \Vert x_n-x^*\Vert^2+h^2L^2\Vert x_n-x^*\Vert^2\\
&=& \Vert y_n-y^*\Vert^2-\frac{\gamma^2}{L^2} \Vert x_n-x^*\Vert^2\\
&\le& (1-\frac{\gamma^2}{L^2(1+l)^2})\Vert y_n-y^*\Vert^2
\eaqn
and the conclusion follows. 
\end{proof}
\subsection{The  (strong, pseudo) monotonicity case}
If the strong monotonicity of $(f,\Id -v)$ is reduced to  the monotonicity (or  pseudo-monotonicity), we have the monotonicity (pseudo-monotonicity respectively) of the operator $f(\Id -v)^{-1}$.  Then one can apply  Tseng  algorithm (Algorithm 2) to the VI (\ref{vire}) to obtain the  weak convergence of the iterated sequence $(x_n)$  to some solution of the QVI (\ref{qvi}) \cite{Tseng} or some modified Tseng-type algorithms (see, e.g., \cite{Cai}) to have the strong convergence. In the case we have the strong pseudo-monotonicity of $(f,\Id -v)$, equivalently of $f(\Id -v)^{-1}$ , one can use the relaxed inertial projection algorithm proposed in \cite{Phan} to obtain the linear convergence. \\

\noindent \textbf{Algorithm \;2}: Given $x_0\in H$, for each $n\ge 0$, we compute 
\beq\label{direct2}
\left\{
\begin{array}{l}
   y_n=(\Id -v)x_n\\ \\
  z_{n}=(\Id -v)^{-1}{\rm proj}_{C}\Big(y_n-hf(x_n)\Big)\\ \\
   x_{n+1}=(\Id -v)^{-1}\{y_n+h(f(x_n)-f(z_n))\}.
\end{array}\right.
\eeq
\section{Application}
The technique developed in the previous sections can be used to propose a derivative-free algorithm to find a solution of the equation $f(x)=0$ where $f$ is only {assumed to be Lipschitz continuous},  by finding a Lipschitz continuous mapping $w$  such that $(f,w)$ is strongly monotone. 
{The subsequent algorithm is derived from Algorithm 1, with the choice of $w$ being $\Id-v$ and $C=H$.}
\vskip 5mm
\noindent \textbf{Algorithm \;3}: Given $x_0\in H$, for $n\ge 0$, we compute 
\beq
x_{n+1}=w^{-1}\Big(w(x_n)-hf(x_n)\Big).
\eeq
\begin{theorem}
Let $f: H \to H$ be a $L_f$-Lipschitz continuous function. Suppose that we can find a $L_w$-Lipschitz continuous mapping $w: H\to H$ such that $(f,w)$ is $\gamma$-strongly monotone and  $w^{-1}$ is well-defined single-valued. Then $f(x)=0$ has a unique solution $x^*$ and  {the sequence $(x_n)$} generated by Algorithm 3 converges to $x^*$ with linear rate. 
\end{theorem}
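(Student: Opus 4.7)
The plan is to specialize the argument of Theorem \ref{ro1im} to the case $C=H$ with the role of $\Id-v$ played by $w$. First I would check that the hypotheses translate cleanly. Since $(f,w)$ is $\gamma$-strongly monotone and $f$ is $L_f$-Lipschitz, Cauchy--Schwarz gives
\[
\gamma\|x-y\|^2\le \langle f(x)-f(y),w(x)-w(y)\rangle\le L_f\|x-y\|\cdot\|w(x)-w(y)\|,
\]
so $\|w(x)-w(y)\|\ge (\gamma/L_f)\|x-y\|$. Hence $w$ is injective and, combined with the standing assumption that $w^{-1}$ is a single-valued mapping on $H$, the inverse is automatically $(L_f/\gamma)$-Lipschitz. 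Thus $g:=f\circ w^{-1}$ is well defined, $(L_f^2/\gamma)$-Lipschitz on $H$, and strongly monotone (the proof of Lemma \ref{lemmon}(i) applies verbatim), so a standard Banach fixed-point argument applied to $y\mapsto y-hg(y)$ for small $h$ yields a unique $y^*\in H$ with $g(y^*)=0$; then $x^*:=w^{-1}(y^*)$ is the unique zero of $f$.

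Next I would track the iterates via the change of variables $y_n:=w(x_n)$. Because $C=H$, the projection in Algorithm 1 is the identity, and Algorithm 3 collapses to
\[
y_{n+1}=w(x_{n+1})=w\bigl(w^{-1}(w(x_n)-hf(x_n))\bigr)=y_n-hf(x_n),
\]
while $f(x^*)=0$ gives $y^*=y^*-hf(x^*)$. Repeating the computation from the proof of Theorem \ref{ro1im}, expanding $\|y_{n+1}-y^*\|^2$ and using the strong monotonicity of $(f,w)$ together with the $L_f$-Lipschitzness of $f$, with the choice $h=\gamma/L_f^2$ I obtain
\[
\|y_{n+1}-y^*\|^2\le \|y_n-y^*\|^2-\frac{\gamma^2}{L_f^2}\|x_n-x^*\|^2.
\]
To turn the right-hand side into $\|y_n-y^*\|^2$ I would use $\|y_n-y^*\|=\|w(x_n)-w(x^*)\|\le L_w\|x_n-x^*\|$, giving
\[
\|y_{n+1}-y^*\|^2\le \Bigl(1-\frac{\gamma^2}{L_f^2 L_w^2}\Bigr)\|y_n-y^*\|^2,
\]
hence linear decay for $(y_n)$. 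Converting back via the $(L_f/\gamma)$-Lipschitzness of $w^{-1}$ and the initial bound $\|y_0-y^*\|\le L_w\|x_0-x^*\|$ yields
\[
\|x_n-x^*\|\le \frac{L_fL_w}{\gamma}\,\rho^n\,\|x_0-x^*\|,\qquad \rho:=\sqrt{1-\gamma^2/(L_fL_w)^2},
\]
which is the claimed linear rate.

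There is no substantive obstacle: the statement is essentially Theorem \ref{ro1im} in the unconstrained case $C=H$. The one point that requires explicit justification is that the hypotheses (strong monotonicity of the pair together with Lipschitzness of $f$) silently upgrade the bare assumption ``$w^{-1}$ is well defined single-valued'' to Lipschitz continuity of $w^{-1}$, which is what lets $g=f\circ w^{-1}$ inherit the good properties needed for both the existence step and the translation between the $y$-rate and the $x$-rate.
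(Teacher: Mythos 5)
Your proposal is correct and follows essentially the same route as the paper: the change of variables $y_n=w(x_n)$, the estimate $\Vert y_{n+1}-y^*\Vert^2\le\Vert y_n-y^*\Vert^2-\tfrac{\gamma^2}{L_f^2}\Vert x_n-x^*\Vert^2$ with $h=\gamma/L_f^2$, and the final contraction factor $1-\gamma^2/(L_f^2L_w^2)$ are exactly the paper's computation. Your extra observation that Cauchy--Schwarz upgrades the bare assumption on $w^{-1}$ to $(L_f/\gamma)$-Lipschitz continuity is a welcome detail that the paper leaves implicit when converting the decay of $\Vert y_n-y^*\Vert$ back to $\Vert x_n-x^*\Vert$.
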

\begin{proof} The strong monotonicity of $(f,w)$ implies the strong monotonicity of $f w^{-1}$. Thus there exists uniquely $y^*$ such that $fw^{-1}(y^*)=0$. Thus $x^*=w^{-1}(y^*)$ is the unique solution of the equation $f(x)=0$.\\

Let $y_n:=w(x_n)$ then we have $y_{n+1}=y_n-hf(x_n)$. Since $(f,w)$ is strongly monotone, we have 
\beq
\langle y_n-y^*,f(x_n)-f(x^*)\rangle \ge \gamma \Vert x_n-x^*\Vert^2.
\eeq
By choosing $h=\gamma/L_f^2$, one has
\baqn
\Vert y_{n+1}-y^*\Vert^2&\le& \Vert y_n-y^*-h(f(x_n)-f(x^*))\Vert^2\\
&\le& \Vert y_n-y^*\Vert^2-2\gamma h \Vert x_n-x^*\Vert^2+h^2L_f^2\Vert x_n-x^*\Vert^2\\
&=& \Vert y_n-y^*\Vert^2-\frac{\gamma^2}{L_f^2} \Vert x_n-x^*\Vert^2\\
&\le& (1-\frac{\gamma^2}{L_f^2L_g^2})\Vert y_n-y^*\Vert^2
\eaqn
and one has the conclusion. 
\end{proof}
\begin{remark}\normalfont
{We usually select \( w \) as a matrix to ensure that the computation of \( w^{-1} \) can be performed with ease.
Then Algorithm 3 becomes like a quasi-Newton method: \( x_{n+1} = x_n - h w^{-1} f(x_n) \). Instead of using the Jacobian matrix of $f$ at \( x_n \), we use the fixed matrix \( w \).
}

\end{remark}
The following case provides a suitable choice of $w$.
\begin{theorem}\label{spe}
{Let \( f: \mathbb{R}^n \to \mathbb{R}^n \) be defined by \( f(x) = Ax + g(x) \), where \( A \) is an invertible matrix and \( g: \mathbb{R}^n \to \mathbb{R}^n \) is a function that is \( L_g \)-Lipschitz continuous. If \( \alpha:=\Vert A^{-1} \Vert L_g  < 1 \), then, for \( w = A \), the sequence \( \{x_k\} \) produced by Algorithm 3 is guaranteed to converge to a zero \( x^* \) of \( f \), and this convergence is at a linear rate.}
\end{theorem}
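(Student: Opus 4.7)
The plan is to verify that the hypotheses of the preceding theorem on Algorithm~3 are satisfied for the choice $w=A$, and then invoke it directly. The only non-trivial condition to check is the $\gamma$-strong monotonicity of the pair $(f,w)=(A(\cdot)+g(\cdot),A)$; once that is in place, the Lipschitz continuity of $f$ and $w$, and the well-definedness of $w^{-1}=A^{-1}$, are immediate from the invertibility of $A$ and the Lipschitz hypothesis on $g$.

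First, I would record the two easy items: $f$ is Lipschitz with constant $L_f\le \|A\|+L_g$, and $w=A$ is linear, hence Lipschitz with constant $L_w=\|A\|$; since $A$ is invertible, $w^{-1}=A^{-1}$ is well-defined and single-valued on $\mathbb{R}^n$. Next, for the strong monotonicity of $(f,w)$, I would compute, for any $x,y\in\mathbb{R}^n$,
\begin{align*}
\langle f(x)-f(y),\,A(x-y)\rangle
&= \|A(x-y)\|^{2} + \langle g(x)-g(y),\,A(x-y)\rangle\\
&\ge \|A(x-y)\|^{2} - L_g\|x-y\|\,\|A(x-y)\|.
\end{align*}
To convert this into a lower bound in terms of $\|x-y\|^{2}$, I would use the identity $x-y=A^{-1}A(x-y)$, which gives $\|x-y\|\le \|A^{-1}\|\,\|A(x-y)\|$. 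Substituting,
\begin{equation*}
\langle f(x)-f(y),\,A(x-y)\rangle \ge (1-\|A^{-1}\|L_g)\,\|A(x-y)\|^{2} = (1-\alpha)\,\|A(x-y)\|^{2},
\end{equation*}
and then using $\|A(x-y)\|^{2}\ge \|x-y\|^{2}/\|A^{-1}\|^{2}$ yields
\begin{equation*}
\langle f(x)-f(y),\,A(x-y)\rangle \ge \frac{1-\alpha}{\|A^{-1}\|^{2}}\,\|x-y\|^{2}.
\end{equation*}
Since $\alpha<1$ by hypothesis, this establishes $\gamma$-strong monotonicity of $(f,A)$ with $\gamma=(1-\alpha)/\|A^{-1}\|^{2}>0$.

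With all three hypotheses verified, I would invoke the previous theorem on Algorithm~3 directly: the equation $f(x)=0$ admits a unique root $x^\ast$, and the iterates $x_{n+1}=A^{-1}(Ax_n-hf(x_n))=x_n-hA^{-1}f(x_n)$ converge linearly to $x^\ast$ (for the admissible step $h=\gamma/L_f^{2}$). The main obstacle is simply bookkeeping around the operator norm: one has to carefully pass between $\|A(x-y)\|$ and $\|x-y\|$ using $\|A^{-1}\|$ in the right direction, and the condition $\alpha=\|A^{-1}\|L_g<1$ is exactly what makes the first inequality positive, so it cannot be relaxed by this argument. No additional machinery beyond elementary operator-norm estimates is needed.
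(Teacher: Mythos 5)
Your proof is correct, but it takes a different route from the paper's. You verify the hypotheses of the preceding general theorem on Algorithm 3 — in particular you show that $(f,A)$ is $\gamma$-strongly monotone with $\gamma=(1-\alpha)/\Vert A^{-1}\Vert^{2}$ via the estimate $\langle f(x)-f(y),A(x-y)\rangle\ge(1-\alpha)\Vert A(x-y)\Vert^{2}$ — and then invoke that theorem as a black box. The paper instead gives a direct, self-contained argument: it substitutes $w=A$ into the iteration to get $x_{n+1}=(1-h)x_n-hA^{-1}g(x_n)$, subtracts the fixed-point relation $Ax^*+g(x^*)=0$, and applies the triangle inequality to obtain $\Vert x_{n+1}-x^*\Vert\le\bigl(1-h(1-\alpha)\bigr)\Vert x_n-x^*\Vert$, i.e.\ a Banach-contraction estimate. (Amusingly, your strong-monotonicity computation is exactly what the paper relegates to the remark immediately after the theorem, rather than using it as the proof.) Each approach buys something: yours is more modular, and it yields existence and uniqueness of the zero $x^*$ for free from the earlier theorem — a point the paper's proof glosses over by simply writing ``let $x^*$ be a zero of $f$.'' The paper's direct argument, on the other hand, is more elementary and gives an explicit and typically much sharper contraction factor $1-h(1-\alpha)$ (equal to $\alpha$ at $h=1$) valid for any step $h\in(0,1]$, rather than the specific step $h=\gamma/L_f^{2}$ and the weaker rate $\sqrt{1-\gamma^{2}/(L_f^{2}L_w^{2})}$ that the reduction produces. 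Both arguments are sound; your operator-norm bookkeeping ($\Vert x-y\Vert\le\Vert A^{-1}\Vert\,\Vert A(x-y)\Vert$ used in the right direction) is handled correctly.
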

\begin{proof}
{Starting from the iteration formula given by Algorithm 3, we have
\begin{align*}
x_{n+1} &= x_{n} - hA^{-1}f(x_n) \\
&= x_{n} - hA^{-1}(Ax_n + g(x_n)) \\
&= (1 - h)x_n - hA^{-1}g(x_n),
\end{align*}
Now, let \( x^* \) be a zero of $f$, i.e. \( f(x^*) = Ax^* + g(x^*) = 0 \). We have
\begin{align*}
\Vert x_{n+1} - x^* \Vert &= \Vert (1 - h)(x_n - x^*) - hA^{-1}(g(x_n) - g(x^*)) \Vert \\
&\leq (1 - h)\Vert x_n - x^* \Vert + h\Vert A^{-1} \Vert \Vert g(x_n) - g(x^*) \Vert \\
&\leq (1 - h)\Vert x_n - x^* \Vert + h\alpha \Vert x_n - x^* \Vert \\
&= (1 - h + h\alpha)\Vert x_n - x^* \Vert \\
&= (1 - h(1 - \alpha))\Vert x_n - x^* \Vert.
\end{align*}
Since \( \alpha < 1 \), it follows that \( 1 - h(1 - \alpha) < 1 \), and hence \( \{x_n\} \) converges to \( x^* \) at a linear rate. This completes the proof.
}
\end{proof}
\begin{remark}\normalfont
{
Under the assumptions of Theorem \ref{spe}, the pair \( (f,A) \) is strongly monotone. In fact, for any vectors \( x_1, x_2 \), the following inequality holds:
\begin{align*}
\langle A(x_1 - x_2) + g(x_1) - g(x_2), A(x_1 - x_2) \rangle &= \Vert A(x_1 - x_2) \Vert^2 + \langle g(x_1) - g(x_2), A(x_1 - x_2) \rangle \\
&\geq (1 - \alpha) \Vert A(x_1 - x_2) \Vert^2 \\
&\geq (1 - \alpha) c \Vert x_1 - x_2 \Vert^2,
\end{align*}
where \( c \) represents the smallest positive eigenvalue of \( A^TA \).
}
\end{remark}

\section{Numerical Examples}
{The aim of this section is to {illustrate} the theoretical parts of the last sections on some simple examples in $\R^2$ and $\R^3$. It is important to emphasize that these examples are purely intended for illustrative purposes.}

\begin{example}\normalfont
{Let $f:\R^2\to\R^2,\;x=(x_1,x_2)\mapsto f(x)$ defined by}
$$f(x)=\left( \begin{array}{ccc}
3 &\;\; 1\\ \\
1 &\;\; 4  
\end{array} \right)x+\left( \begin{array}{ccc}
0.5\cos^3(x_2)  \\ \\
0.7\sin(x_1)  
\end{array} \right), v=\left( \begin{array}{ccc}
-0.2 &\;\; -0.4\\ \\
-0.4 &\;\; -0.6
\end{array} \right),$$
and $C=[-30,40]^2$. 
\begin{figure}
\begin{center}
\includegraphics[width=3.5in]{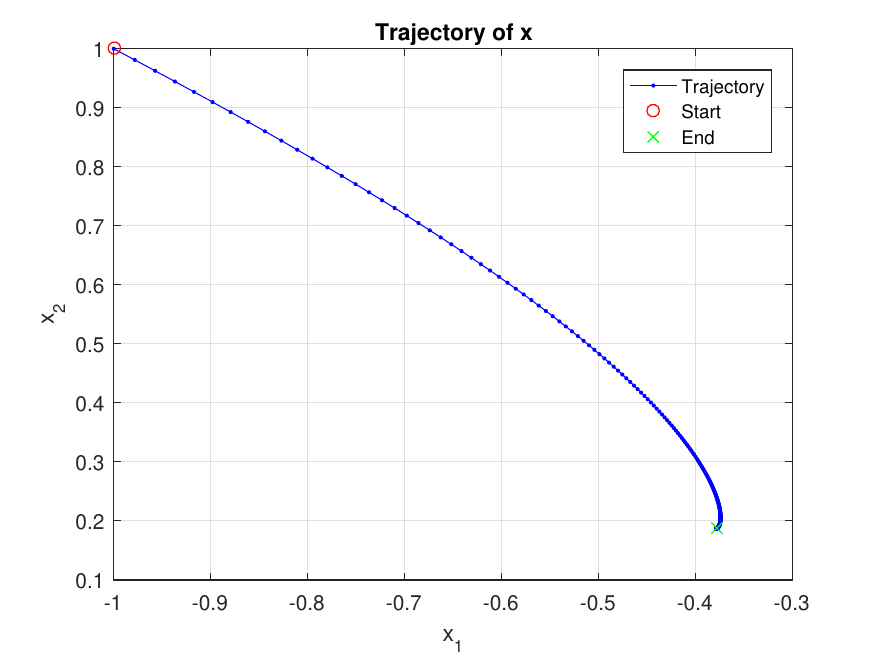}\includegraphics[width=3.5in]{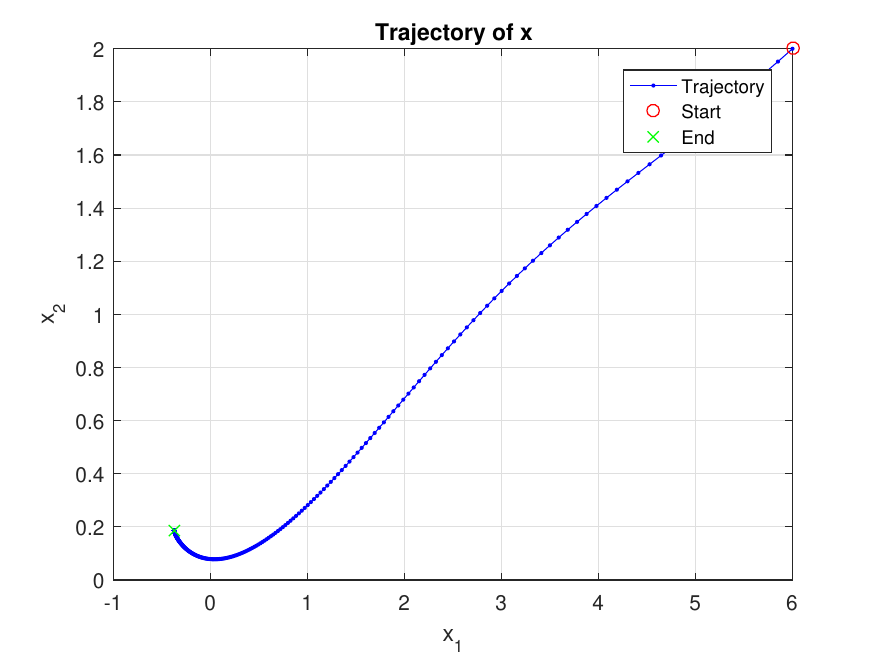}
\caption{Phase portrait for Example 1: Trajectories originating from two distinct initial conditions}
\label{fig3}
\end{center}
\end{figure}
{Then, the function \( f \) is \( 1.68 \)-strongly monotone and \( 5.32 \)-Lipschitz continuous, while \( v \) is \( 0.85 \)-Lipschitz continuous. We first note that all the conditions of Theorem~\ref{ro1} are satisfied. However, the relation \( 0.85 > \frac{1.68}{5.32} \) implies that classical approaches are not applicable.} Figure~\ref{fig3} presents the phase portrait for Example 1, depicting the trajectories resulting from two different initial conditions, \( x_0 = [6; 2] \) and \( x_0 = [-1; 1] \), obtained by implementing Algorithm 1 with a step size of \( h = 0.01 \). These trajectories demonstrate the evolution of the system across iterations, culminating in the convergence towards the equilibrium point \( x = [-0.3785; 0.1870] \).

\end{example}
\begin{example}\normalfont
{Let $f:\R^3\to\R^3,\;x=(x_1,x_2,x_3)\mapsto f(x)$ defined by}
$$f(x)=\left( \begin{array}{ccc}
5 &\;\; 7&\;\; 2 \\ \\
4 &\;\; 3 &\;\;  -3 \\ \\
8&\;\;1 &\;\;2
\end{array} \right)x+\left( \begin{array}{ccc}
1.2\vert\sin^3(x_2) \vert \\ \\
1.1\vert\sin(x_3)\vert  \\ \\
\cos^3(\vert x_1\vert+x_3)
\end{array} \right), v=\left( \begin{array}{ccc}
-9 &\;\; -14&\;\; -4 \\ \\
-8 &\;\; -5 &\;\;  6 \\ \\
-16&\;\;-2 &\;\;-3
\end{array} \right),$$
and $C=[-400,500]^3$. 
{We observe that the function $f$ lacks monotonicity and  $v$ is  Lipschitz continuous with a constant of $23.12$ while the pair $(f, \Id - v)$ demonstrates strong monotonicity. \\
By employing Algorithm 1 with an initial value of $x_0 = [43; 22 ;55]$ and a step size of $h=0.3$, we successfully obtain an approximation of the unique solution after a total of $83$ iterations. This solution is given by $x^* =  [-0.1249; 0.1025; -0.0469]$. Meanwhile, the catching-up algorithm proves unsuccessful in this case.}
\end{example}
\begin{example}\normalfont
{Let $f:\R^3\to\R^3,\;x=(x_1,x_2,x_3)\mapsto f(x)$ and Let $v:\R^3\to\R^3,\;x=(x_1,x_2,x_3)\mapsto v(x)$ defined by}
 $$f(x)=\left( \begin{array}{ccc}
5 &\;\; 7&\;\; 2 \\ \\
4 &\;\; 3 &\;\;  -3 \\ \\
8&\;\;1 &\;\;2
\end{array} \right)x+\left( \begin{array}{ccc}
0.8\sin^2(x_2)  \\ \\
0.7\sin(x_3)  \\ \\
0.8\cos^3(x_1+x_3)
\end{array} \right)$$
$$
v(x)=Ax+v_1(x)=\left( \begin{array}{ccc}
-9 &\;\; -14&\;\; -4 \\ \\
-8 &\;\; -5 &\;\;  6 \\ \\
-16&\;\;-2 &\;\;-3
\end{array} \right)x+\left( \begin{array}{ccc}
0.6\cos^2(x_2)  \\ \\
0.5\sin(x_1)  \\ \\
0.7\sin^2(x_3)
\end{array} \right).
$$
{It's worth noting that $(f, \Id - v)$ exhibits strong monotonicity. When we apply Algorithm 1 with an initial value of $x_0 = [5;4 ;2]$ and a step size of $h=0.3$, we reach the unique solution after a total of  110 iterations. This numerical solution is given by $x^* = [-0.0868; 0.6040; 0.6839]$.}
\end{example}

\begin{example}\normalfont
{
Consider the problem of solving the equation \( f(x) = 0 \), where
$$f(x)=\left( \begin{array}{ccc}
5 &\;\; 7&\;\; 2 \\ \\
4 &\;\; 3 &\;\;  -3 \\ \\
8&\;\;1 &\;\;2
\end{array} \right)x+\left( \begin{array}{ccc}
3\vert\sin(x_3)\vert  \\ \\
\vert\cos(x_1)\vert+2\vert \sin(x_2)\vert  \\ \\
3\cos(x_1+\vert x_3\vert)
\end{array} \right)=Ax+g(x).$$
Using the initial point \( x_0 = [10^4, 2 \times 10^4, 3 \times 10^4] \), the application of Algorithm 3 to the function \( f \), with \( v = \Id - A \) and \( h = 1 \), successfully converged to the solution \( {x^*} = [-0.0931; 0.0816; -0.0555] \) in 18 iterations. The function evaluated at \( {x^*} \) is \( f({x^*}) = 10^{-13} \times [-0.0003; 0.2991; 0.0877] \), which is sufficiently close to zero, confirming that \( {x^*} \) is indeed an approximate solution to the equation \( f(x) = 0 \) within the specified numerical tolerance. This approach is beneficial because it does not require calculating or estimating the Jacobian of $f$, which is not possible for \( f \) as it is a nonsmooth function. 
}

\end{example}
\section{Conclusions} 
In the paper, we introduce the monotonicity of a pair of functions which is essential  in the study of asymptotic behavior of state-dependent SWPs and the convergence analysis in QVIs.  As a product, we propose a new simple algorithm for QVIs with linear convergence rate under the strong monotonicity of the involved pair of functions, which is remarkably weaker than the classical conditions. The idea can be used to consider the cases of monotonicity, (strong) pseudo-monotonicity of QVIs which has not been studied in the literature. We also propose a derivative-free algorithm to find a zero of a nonsmooth Lipschitz continuous mapping. The presence of the inverse operator $(\Id-v)^{-1}$ broaden the applications. 
{It's desirable to find simpler algorithms that don't require computing the inverse operator, yet still maintain the same convergence condition and rate. This is a pertinent open question. While we've tested our approach on academic examples, it would be particularly interesting to apply these methods to more substantive concrete examples that fit the QVI framework.
These investigations open up new paths for upcoming research.
}

%
%

\end{document}